\theoremstyle{plain}
\newtheorem{thm}{Theorem}[section]
\newtheorem{prop}[thm]{Proposition}
\theoremstyle{definition}
\newtheorem{rem}{Remark}[section]
\theoremstyle{remark} \tolerance=10000 \hbadness=10000
\def \ni{\noindent}
\author{
	Anu V.\footnote{E-mail : anusaji1980@gmail.com}\\ Department of Mathematics\\
	St.Peter's College\\ Kolenchery - 682 311\\  Kerala, India.\and
	Aparna Lakshmanan S.\footnote{E-mail : aparnaren@gmail.com}\\
	Department of Mathematics\\St.Xavier's College for Women\\Aluva -
	683 101\\\vspace{0.2cm} Kerala, India.}
\title{On the Double Roman Domination Number of Generalized Sierpi\'{n}ski Graphs}
\date{}
\begin{document}

\maketitle

\begin{abstract}

 In
this paper, we  study the double Roman domination number of generalized Sierpi\'{n}ski graphs $S(G,t)$. More precisely, we obtain a  bound for the  double Roman domination number of $S(G, t)$. We also find the exact value of $\gamma_{dR}(S(K_{n}, 2))$.\\


\ni {\bf Keywords:} Double Roman Dominating Function, Double Roman Domination Number, Sierpi\'{n}ski Graphs.\\

 \ni{\bf AMS
Subject Classification:} 05C69; 05C76


\end{abstract}


\section{Introduction}
Let $G = (V(G),E(G))$ be a graph with vertex set $V(G)$ and edge
set $E(G)$. If there is no ambiguity in the choice of $G$, then we
write $V(G)$ and $E(G)$ as $V$ and $E$, respectively.  Let
$f:V\rightarrow \{0,1,2,3\}$ be a function defined on $V(G). $  Let $V_{i}^{f}=\{v\in V(G) : f(v)=i\}.$ (If there is no ambiguity, $V_{i}^{f}$ is written as $V_{i}$.) Then $f$ is a double Roman dominating function  
 (DRDF)
on  $G$ if it satisfies the  following conditions.\\
(i) If $v\in V_{0}$, then vertex $v$ must have at least two
neighbors
in $V_{2}$ or at least one neighbor in $V_{3}$.\\
(ii) If $v\in V_{1}$, then vertex $v$ must have at least one
neighbor
in $V_{2} \cup V_{3}$.\\

The weight of a DRDF $f$ is
the sum $f(V)=\sum_{v\in V}f(v)$. The double Roman domination
number, $\gamma_{dR}(G)$, is the minimum among the weights of  DRDFs on $G$, and a DRDF
on $G$ with weight $\gamma_{dR}(G)$ is called a
$\gamma_{dR}$-function of $G$
\cite{Bee}.\\

Let $(V_{0}, V_{1}, V_{2}, V_{3})$ be the ordered partition of $V$ induced by $f$. Note that there exists a $1-1$ correspondence between the functions $f$ and the ordered partitions $(V_{0}, V_{1}, V_{2}, V_{3})$ of $V$. Thus we will write $f=(V_{0}, V_{1}, V_{2}, V_{3})$.\\

 R. A. Beeler, T. W.
Haynes and S. T. Hedetniemi pioneered  the study of double Roman domination in \cite{Bee}. The relationship between double Roman domination and Roman
domination and  the bounds on the double Roman domination number of
a graph $G$ in terms of its domination number were discussed by them. They also determined a sharp upper bound on $\gamma_{dR}(G)$ in terms of the
order of $G$ and characterized the graphs attaining this bound. In \cite{Abd1}, it was verified that the decision problem associated with  $\gamma_{dR}(G)$ is NP-complete for bipartite and chordal graphs. Above all this, a characterization of graphs $G$ with small $\gamma_{dR}(G)$ was provided. In \cite{Hao}, G. Hao et al. introduced the study of the double Roman domination of digraphs and  L. Volkmann proposed a sharp lower bound on $\gamma_{dR}(G)$ in \cite{Vol}. In \cite{Anu}, it was proved that  $\gamma_{dR}(G)+2 \leqslant \gamma_{dR}(M(G)) \leqslant \gamma_{dR}(G)+3$,
where $M(G)$ is the Mycielskian graph of $G$ and a construction was also given which  confirms that there
is no relation between the double Roman domination number of a
graph and its induced subgraphs. The impact of  some graph operations such as corona, cartesian product and addition of twins,  on 
double Roman domination number was studied in \cite{Anu1}. In \cite{Amj}, J. Amjadi et al. improved the upper bound on  $\gamma_{dR}(G)$ given in \cite{Bee} by showing that for any connected graph $G$ of order $n$ with minimum degree at least two, $\gamma_{dR}(G)\leqslant \frac{8n}{7}$.

\subsection{Basic Definitions and Preliminaries}
The open
neighborhood of  a vertex $v\in V$ is the set $N(v)=\{u: uv \in
E\}$, and its closed neighborhood is $N[v]=N(v)\cup \{v\}$.
The vertices in $ N(v)$ are called the neighbors of $v$.  When $G$ must be explicit, these open and closed neighborhoods are denoted by $N_{G}(v)$ and $N_{G}[v]$, respectively.   $|N(v)|$ is called the degree of the vertex $v$ in $G$ and is denoted by $d_{G}(v)$, or simply $d(v)$. A vertex of degree $0$ is known as an isolated vertex of $G$. \par
 If $U$ is a non-empty subset of the vertex set $V$ of the graph $G$ then the subgraph $<U>$ of $G$ induced by $U$ is defined as the graph having vertex set $U$ and edge set consisting of those edges of $G$ that have both ends in $U$. A subset $S$ of the vertex set $V$ of a graph $G$ is called independent if no two vertices of $S$ are adjacent in $G$. $S\subseteq V$ is a maximum independent set of $G$ if $G$ has no independent set $S'$ with $|S'|>|S|$. The number of vertices in a maximum independent set of $G$ is called the independence number,  denoted by $\alpha(G)$. A complete graph on $n$ vertices, denoted by  $K_{n}$, is the graph in which any two vertices are adjacent.\\

 A Roman dominating function (RDF) on a graph $G=(V,E)$ is defined as a function $f:V\rightarrow \{0,1,2\}$ satisfying the condition that every vertex $v$ for which $f(v)=0$ is adjacent to at least one vertex $u$ for which $f(u)=2$. The weight of a RDF is the value $f(V)=\sum_{v\in V}f(v)$. The Roman domination number of a graph $G$, denoted by $\gamma_{R}(G)$, is the minimum among the weights of  RDFs on $G$.\\

 Let $G=(V,E)$ be a non-empty graph of order $n\geqslant 2$, and $t$ a positive integer. We denote by $V^{t}$ the set of words of length $t$ on alphabet $V$. The letters of a word $u$ of length $t$ are denoted by $u_{1}u_{2}\ldots u_{t}$. Klav\v{z}ar and Milutinovi\'{c} introduced in \cite{Kla} the graph $S(K_{n},t), t\geqslant 1$, ($S(t,n)$ in their notation) whose vertex set is $V^{t},$ where $\{u,v\}$ is an edge if and only if there exists $i\in \{1, 2,\ldots,t\}$ such that:
 \begin{center}
 	(i) $u_{j}=v_{j}, $ if $j< i$; (ii) $u_{i}\neq v_{i}$; (iii) $u_{j}=v_{i}$ and $v_{j}=u_{i}$ if $j>i$.
 \end{center}
 Later, those graphs have been called Sierpi\'{n}ski graphs in \cite{Kla1}. This construction was generalized in \cite{Gra} for
 any graph $G=(V,E)$, by defining the  $t^{th}$ generalized Sierpi\'{n}ski graph of $G$, denoted by $S(G,t)$, as the graph with  vertex set $V^{t}$ and  edge set $\{\{wu_{i}u_{j}^{r-1}, wu_{j}u_{i}^{r-1}\}: \{u_{i}, u_{j}\}\in E, i\neq j; r\in \{1, 2,\ldots,t\}; w\in V^{t-r}\}$. (See Figure \ref{Sier1} and \ref{Sier2}.)
 	\begin{figure}[h]
 		\begin{center}
 			\includegraphics[width=12cm]{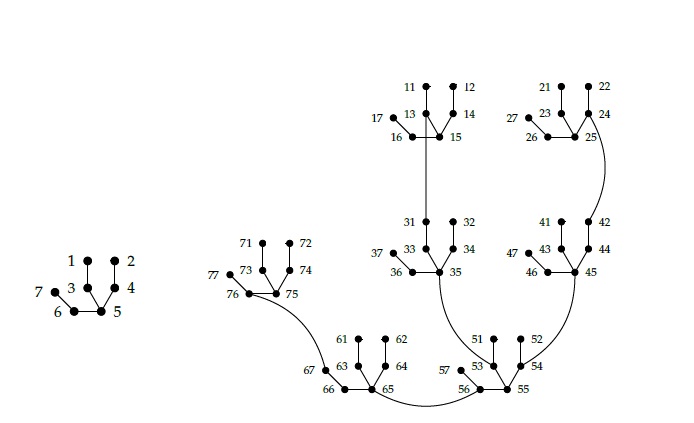}
 			\caption{{\scriptsize A graph $G$ and $S(G,2)$.}}
 			\label{Sier1}
 		\end{center}
 	\end{figure}
 	\begin{figure}[h]
 		\begin{center}
 			\includegraphics[width=20cm]{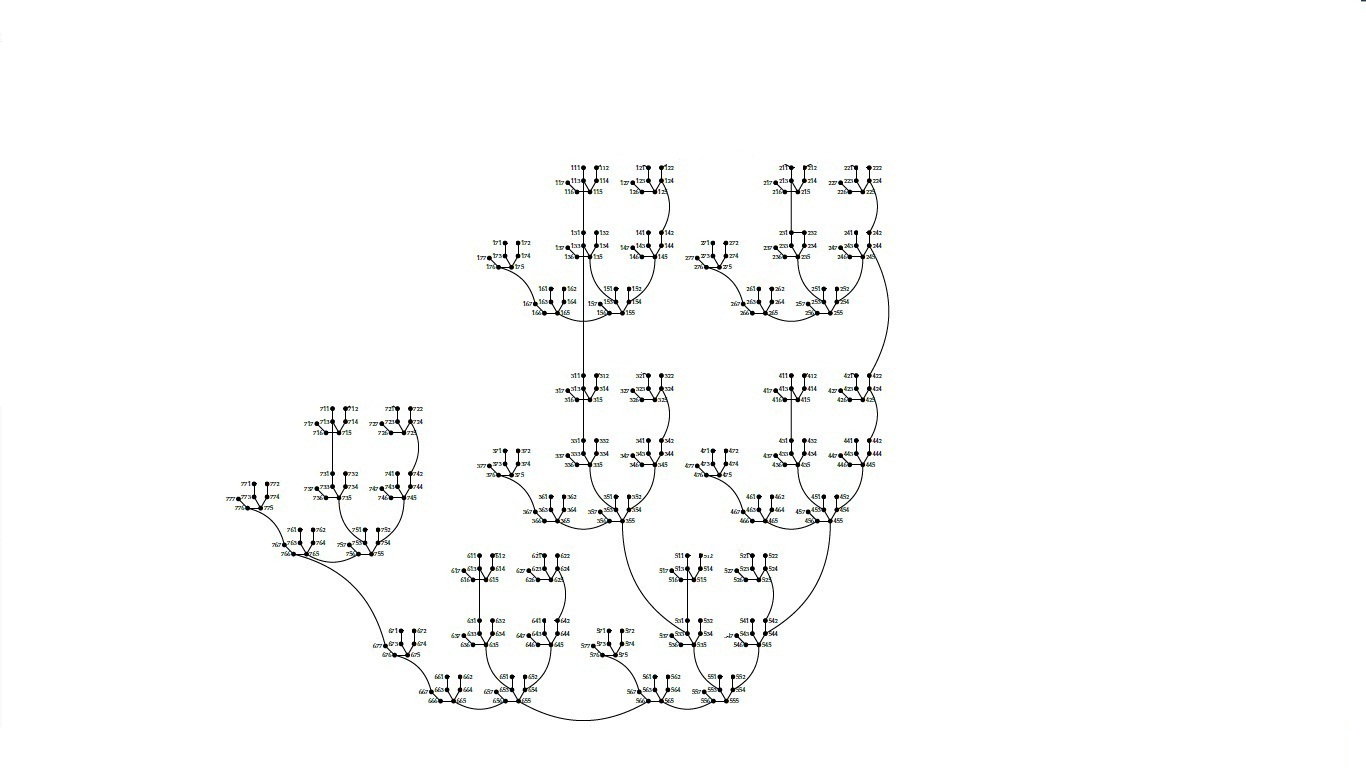}
 			\caption{{\scriptsize $S(G,3)$ for the graph $G$ in Figure \ref{Sier1}.}}
 			\label{Sier2}
 		\end{center}
 	\end{figure}	
 	 Vertices of the form $xx\ldots x$ are called extreme vertices of $S(G,t)$. Note that for any graph $G$ of order $n$ and any integer $t\geqslant 2$, $S(G,t)$ has $n$ extreme vertices and, if $x$ has degree $d(x)$ in $G$, then the extreme vertex $xx\ldots x$ of $S(G,t)$ also has degree $d(x)$.\\
 
 For any graph theoretic terminology and notations not mentioned
here, the readers may refer to \cite{Bal}. The following results are useful in this paper.\\

\begin{prop}\label{1}
	\cite{Bee} In a double Roman dominating function of weight
	$\gamma_{dR}(G)$, no vertex needs to be assigned the value $1$.
\end{prop}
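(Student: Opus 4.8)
The plan is to show that among all minimum-weight double Roman dominating functions there is one with $V_1 = \emptyset$; equivalently, that starting from any $\gamma_{dR}$-function one can drive the set of value-$1$ vertices to empty without increasing the weight. First I would fix a $\gamma_{dR}$-function $f = (V_0, V_1, V_2, V_3)$ that, subject to having minimum weight $\gamma_{dR}(G)$, also has $|V_1|$ as small as possible. The aim is then to derive a contradiction from the assumption $V_1 \neq \emptyset$.

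Next I would pick any $v \in V_1$ and split into cases according to its neighbourhood, using condition (ii), which guarantees that $v$ has at least one neighbour in $V_2 \cup V_3$. If $v$ has a neighbour in $V_3$, or at least two neighbours in $V_2$, then reassigning $f(v) \to 0$ keeps $f$ a valid DRDF: condition (i) is satisfied at $v$ by exactly that neighbour structure, and lowering $f(v)$ cannot violate the condition at any other vertex. But this strictly decreases the weight, contradicting that $f$ has weight $\gamma_{dR}(G)$.

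The remaining, and only delicate, case is when $v$ has no neighbour in $V_3$ and exactly one neighbour $u \in V_2$. Here I would perform the weight-preserving swap $f(v) \to 0$ and $f(u) \to 3$: the weight changes by $-1 + 1 = 0$, so it remains $\gamma_{dR}(G)$. One then checks validity: $v$ is now a $V_0$-vertex with the neighbour $u \in V_3$, so (i) holds at $v$; raising $f(u)$ from $2$ to $3$ can only help the conditions at $u$ and its neighbours; and no vertex ever depended on $v$ lying in $V_2 \cup V_3$, since $v$ was in $V_1$. Thus the new function is again a $\gamma_{dR}$-function, but $v$ has left $V_1$ while no vertex has entered it (the only other change moved $u$ from $V_2$ into $V_3$), so $|V_1|$ has strictly decreased, contradicting the minimality of $|V_1|$.

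Since every case yields a contradiction, the assumption $V_1 \neq \emptyset$ fails, and a $\gamma_{dR}$-function with $V_1 = \emptyset$ exists. I expect the main obstacle to be this last case: the other cases cut the weight outright, whereas here one must find a modification that keeps the weight constant, and the decisive idea is precisely to minimise $|V_1|$ as a secondary objective, so that even a weight-neutral move produces a contradiction.
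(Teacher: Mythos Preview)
Your argument is correct. Note, however, that the present paper does not supply its own proof of this proposition: it is quoted from \cite{Bee} and used as a known fact, so there is no in-paper proof to compare against. What you have written is essentially the standard proof from \cite{Bee}: choose a $\gamma_{dR}$-function minimising $|V_1|$ as a secondary criterion, and for any $v\in V_1$ either drop $f(v)$ to $0$ (if $v$ already has a $V_3$-neighbour or two $V_2$-neighbours, contradicting minimum weight) or, in the remaining case of a unique $V_2$-neighbour $u$, perform the weight-neutral swap $f(v)\to 0$, $f(u)\to 3$ to contradict minimality of $|V_1|$. All the validity checks you list are accurate, so nothing further is needed.
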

\ni i.e., For any graph $G$, there exists a $\gamma_{dR}$-function with $V_{1}=\emptyset$.

\begin{thm}\label{Rdnsknt}
	\cite{Ram} For any integers $n\geqslant 2$ and $t\geqslant 1$,
	\[
	\gamma_{R}(S(K_{n}, t))\leqslant
	\begin{cases}
	\frac{2n^{t}+n-1}{n+1}, & \text{t\ \ even},\\
	\frac{2(n^{t}+1)}{n+1}, & \text{t\ \ odd}.
	\end{cases}
	\] 
\end{thm}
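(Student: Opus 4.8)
The plan is to prove the bound constructively: for every $n$ and $t$ I would exhibit an explicit RDF of $S(K_n,t)$ whose weight equals the stated right-hand side, building it by induction on $t$ from the self-similar structure. Recall that $S(K_n,t+1)$ consists of $n$ disjoint copies $C_1,\dots,C_n$ of $S(K_n,t)$ (where $C_i$ is the set of words with first letter $i$), together with the connecting edges $\{i\,j^{t},\,j\,i^{t}\}$ for $i\neq j$; crucially, each connecting edge is incident with the \emph{extreme} vertices $i\,j^{t}$ and $j\,i^{t}$ of the two copies, while the global extreme vertex $i^{t+1}$ carries no connecting edge. The base case is $S(K_n,1)=K_n$, for which assigning $2$ to one vertex and $0$ to the rest is an RDF of weight $2$, matching the odd formula at $t=1$.

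The guiding heuristic is that, apart from the $n$ extreme vertices, every vertex of $S(K_n,t)$ has degree exactly $n$, so a vertex labeled $2$ Roman-dominates its closed neighborhood of $n+1$ vertices. I would therefore seek an RDF supported on a set $D$ of label-$2$ vertices whose closed neighborhoods are essentially disjoint and cover $V(S(K_n,t))$, together with at most one leftover vertex labeled $1$. Since $|V|=n^{t}$ and $n\equiv -1\pmod{n+1}$, one has $n^{t}\equiv(-1)^{t}\pmod{n+1}$, and this residue dictates the two cases: when $t$ is odd the $n^{t}$ vertices can be partitioned by $\frac{n^{t}+1}{n+1}$ closed neighborhoods (one of them centered at an extreme vertex of degree $n-1$), giving weight $\frac{2(n^{t}+1)}{n+1}$ with no leftover; when $t$ is even exactly one vertex is left over and is assigned $1$, giving weight $\frac{2n^{t}+n-1}{n+1}$. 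I would carry through the induction the invariant that the extreme vertex $n^{t}=nn\cdots n$ lies in $D$ (label $2$) when $t$ is odd and is the unique leftover (label $1$) when $t$ is even, while every other extreme vertex is dominated.

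For the inductive step I would place a copy of the level-$t$ function on each $C_i$ and then edit the labels along the connecting edges. Going from odd $t$ to even $t+1$, the required saving is exactly $1$: the copies already tile their interiors, and the connecting edges produce a redundantly covered boundary vertex that lets me rebalance the labels so as to delete one label-$2$ overall and introduce a single label-$1$ at $n^{t+1}$, checking that nothing becomes undominated. Going from even $t$ to odd $t+1$, the required saving is $n-2$, which comes from absorbing leftovers: the $n$ copies each contribute a leftover $i\,n^{t}$ labeled $1$, and for $i\neq n$ this leftover is joined by a connecting edge to the extreme vertex $n\,i^{t}$ of the distinguished copy $C_n$. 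I would redesign the function on $C_n$ alone so that its label-$2$ vertices sit on (or dominate across the connecting edges) the vertices $n\,i^{t}$, thereby covering the external leftovers $i\,n^{t}$ and its own would-be leftover $n^{t+1}$; the other $n-1$ copies then drop their label-$1$'s, and a weight count gives the net change $-(n-1)+1=-(n-2)$. As a concrete anchor I would first carry out $t=2$ in full, where $D=\{\,k\,n:1\le k\le n-1\,\}$ with $n\,n$ labeled $1$ is checked directly to be an RDF of weight $2(n-1)+1=2n-1$.

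The main obstacle is the even-to-odd absorption step. Unlike the $t=2$ instance---where the distinguished copy's new code is simply its $n$ sub-extreme vertices, which happen to tile $S(K_n,2)$ perfectly---for larger even $t$ one must produce a full efficient-type labeling of $C_n$ that simultaneously covers all of $C_n$ and reaches the prescribed external leftovers through the connecting edges, and then prove that the resulting global function is a valid RDF of exactly the claimed weight. Making this redesign explicit and compatible with the extreme-vertex invariant, so that the induction closes cleanly through both parities, is where the real work lies; the remaining verifications, namely that every label-$0$ vertex keeps a label-$2$ neighbor after the edits, are routine case checks separating interior edges from connecting edges.
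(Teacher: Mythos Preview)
This theorem is not proved in the present paper: it is quoted in the preliminaries as a result of Ramezani, Rodr\'{\i}guez-Bazan and Rodr\'{\i}guez-Vel\'{a}zquez \cite{Ram}, and the authors use only its $t=2$ consequence later on. There is therefore no in-paper proof to compare your proposal against.

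For what it is worth, your strategy is the natural one and is essentially the approach taken in the original source: the bound comes from the existence of $1$-perfect (respectively, almost $1$-perfect) codes in $S(K_n,t)$, established by Klav\v{z}ar, Milutinovi\'{c} and Petr \cite{Kla1}. A $1$-perfect code $D$ yields an RDF by labeling $D$ with $2$ and everything else with $0$, of weight $2|D|$, and the parity of $t$ governs whether such a code exists exactly or with one leftover vertex---precisely the dichotomy you describe. Your inductive scheme is in effect a reconstruction of those codes; the ``even-to-odd absorption'' difficulty you flag is real but is handled in \cite{Kla1} by an explicit recursive description of the code, which you could cite rather than re-derive. Your $t=2$ anchor is correct and matches the paper's own computation that $\gamma_R(S(K_n,2))=2n-1$.
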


\section{Bounds on the Double Roman Domination Number}
First we prove a lower bound for $\gamma_{R}(S(G,t))$.
\begin{thm}\label{rdn}
	For any graph $G$ of order $n$, $\gamma_R(S(G,t))\geqslant n^{t-2} \alpha(G) \gamma_{R}(G)$, where $\alpha(G)$ is the independence number of $G$.
\end{thm}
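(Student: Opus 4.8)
The plan is to exploit the self-similar block structure of $S(G,t)$ together with a charging argument anchored on a maximum independent set of $G$. First I would record the decomposition of $S(G,t)$ into the $n^{t-2}$ blocks $B_v=\{vu:u\in V^2\}$ indexed by words $v\in V^{t-2}$, each isomorphic to $S(G,2)$, and the further decomposition of each block into the $n$ copies $C_{vx}=\{vxy:y\in V\}$ of $G$ indexed by $x\in V$. The crucial structural observations to establish are: (i) within a block, the copies $C_{vx}$ are joined exactly as in $S(G,2)$ by the ``type-$2$'' edges $vxy\sim vyx$ (for $\{x,y\}\in E$), all internal to $B_v$; (ii) a vertex $vxy$ with $x\neq y$ has no edge leaving its block, so the only inter-block edges emanate from the local extreme vertices $vxx$; and (iii) every such vertex has at most one inter-block edge, so the inter-block edges form a matching on the local extreme vertices, each edge joining $vxx$ to some $v'yy$ with $\{x,y\}\in E$.

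Next, I would fix a $\gamma_R$-function $f$ of $S(G,t)$ and a maximum independent set $I$ of $G$, and designate the $n^{t-2}\alpha(G)$ copies $C_{vx}$ with $x\in I$ as the \emph{charged} copies. On a charged copy, reading $f$ through the identification $C_{vx}\cong G$ gives a labelling $g_{vx}$ that is \emph{almost} an RDF of $G$: it can fail only at a vertex $vxz$ with $f(vxz)=0$ having no value-$2$ neighbour inside $C_{vx}$ (its witness then lying in another copy). Raising each such failure value from $0$ to $1$ repairs $g_{vx}$ into a genuine RDF, whence $f(C_{vx})\geq\gamma_R(G)-\mathrm{fail}(vx)$, where $\mathrm{fail}(vx)$ counts the failures. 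Summing over the charged copies yields $\sum f(C_{vx})\geq n^{t-2}\alpha(G)\gamma_R(G)-F$, with $F$ the total number of failures.

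The heart of the argument is then to recover the deficit $F$ from the \emph{uncharged} copies. By (ii) a failure at $vxz$ is of one of two kinds: an \emph{in-failure} ($z\neq x$), dominated across a type-$2$ edge by $vzx$ with $z\in N_G(x)$, hence $z\notin I$; or an \emph{out-failure} ($z=x$), dominated across the unique inter-block matching edge by some $v'yy$ with $\{x,y\}\in E$, hence $y\notin I$. In both cases the dominating vertex carries value $2$ and sits in an uncharged copy. I would then verify that these witnesses are pairwise distinct: the in-witnesses $vzx$ have their last two letters distinct while the out-witnesses have theirs equal; the in-witnesses are distinct because $vzx$ determines $(v,x,z)$; and the out-witnesses are distinct because the inter-block edges form a matching. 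Hence their combined weight is $2F$ and lies entirely in the uncharged copies, giving $\gamma_R(S(G,t))=f(V)\geq\bigl(n^{t-2}\alpha(G)\gamma_R(G)-F\bigr)+2F\geq n^{t-2}\alpha(G)\gamma_R(G)$.

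The main obstacle is step (iii) together with the independence bookkeeping it unlocks: proving that inter-block edges form a matching on the local extremes (which requires analysing the maximal trailing constant run of a word and checking that only one admissible ``type'' of inter-block edge survives), and then using the independence of $I$ to guarantee that every out-failure witness $v'yy$ satisfies $y\notin I$, so that it lies in an uncharged copy and is not silently absorbed into the charged total. Once these structural facts and the disjointness of the two witness sets are in place, the remaining estimates are routine, and the $t=2$ case is recovered as the degenerate instance with a single block and no inter-block edges.
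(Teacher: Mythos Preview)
Your argument is correct and follows essentially the same strategy as the paper: single out the $n^{t-2}\alpha(G)$ copies $C_{vx}$ with $x$ in a maximum independent set and use independence to ensure that any external dominator of such a copy lies in an unselected copy, without collisions. The paper's version is far terser---it simply asserts that for $i,j$ in the independent set the copies $V_{wi},V_{wj}$ are non-adjacent with $N(V_{wi})\cap N(V_{wj})=\emptyset$ and then passes directly to the bound---so your failure/witness charging and your verification that the inter-block edges form a matching on the local extreme vertices make explicit precisely the step the paper leaves to the reader.
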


\begin{proof}
	Let $V'\subseteq V$ be an independent set of cardinality $\alpha(G)$. For any $w\in V^{t-2}, i\in V,$ let $V_{wi}=\{wij: j\in V\}.$ Note that $\{V_{wi}: i\in V\}$ is a partition of the  vertex set of $S(G,t)$ and $<V_{wi}>\cong G,$ for every $i$ and hence there are $n^{t-1}$ disjoint copies of $G$ in $S(G, t).$ If $u$ and $v$ are adjacent in $S(G, t),$ then $u$ and $v$ are of the form $u=wxy^{r-1}, v=wyx^{r-1},$ where $w\in V^{t-r}, r\in \{1, 2,\ldots,t\}$ and $x$ and $y$ are adjacent in $G.$ Hence, for every $i,j\in V',$ none of the vertices in $V_{wi}$ is adjacent to any of the vertices in $V_{wj}.$ Also, $N(V_{wi})\cap N(V_{wj})=\emptyset,$ for $i\neq j.$  Therefore, $f(V^{t})\geqslant n^{t-2}\alpha(G)\gamma_{R}(G)$ for any RDF $f$ of $S(G,t)$  and hence $\gamma_{R}(S(G, t))\geqslant n^{t-2}\alpha(G)\gamma_{R}(G).$
\end{proof}

\begin{rem}
	It is clear that the inequality in  Theorem \ref{rdn} holds for other domination parameters like domination number, independence domination number, total domination number and many more.
\end{rem}

\begin{thm} \label{sier}
	Let $G$ be a graph of order $n$. For any $\gamma_{dR}$-function $f=(V_{0}, V_{2}, V_{3})$ on $G$, and any integer $t\geqslant 2$,$$n^{t-2}\alpha(G)\gamma_{dR}(G)\leqslant \gamma_{dR}(S(G, t))\leqslant n^{t-2}(n\gamma_{dR}(G)-|V_{3}|-|D_{3}|),$$ where $\alpha(G)$ is the independence number of $G$ and $D_{3}$ is the set of non-isolated vertices in $<V_{3}>$. 
\end{thm}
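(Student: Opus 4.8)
The statement splits into a lower and an upper bound, which I would attack by completely different means. For the lower bound $n^{t-2}\alpha(G)\gamma_{dR}(G)\le \gamma_{dR}(S(G,t))$, the plan is simply to rerun the proof of Theorem~\ref{rdn}. Fixing a maximum independent set $V'\subseteq V$ with $|V'|=\alpha(G)$, the copies $V_{wi}=\{wij:j\in V\}$ with $i\in V'$ induce subgraphs isomorphic to $G$, are pairwise non-adjacent, and satisfy $N(V_{wi})\cap N(V_{wj})=\emptyset$; there are $n^{t-2}\alpha(G)$ of them and their closed neighbourhoods are pairwise disjoint, so any DRDF of $S(G,t)$ must spend at least $\gamma_{dR}(G)$ near each. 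This is exactly the mechanism of Theorem~\ref{rdn}, which the Remark following it records as valid for domination parameters other than $\gamma_R$, double Roman domination included, so I would invoke that argument verbatim.

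For the upper bound I would exhibit an explicit DRDF. First I would cut $V^t$ into the $n^{t-2}$ blocks $B_w=\{wij:i,j\in V\}$ indexed by $w\in V^{t-2}$. Each $\langle B_w\rangle$ is isomorphic to $S(G,2)$ (these are precisely the edges of type $r\in\{1,2\}$), while every other edge of $S(G,t)$ (type $r\ge 3$) joins two distinct blocks. Since inserting edges can never violate the double Roman condition at a vertex, it suffices to place, independently on each block, a DRDF of $S(G,2)$ of weight $n\gamma_{dR}(G)-|V_3|-|D_3|$; the total is then $n^{t-2}(n\gamma_{dR}(G)-|V_3|-|D_3|)$. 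Thus the whole problem reduces to constructing one good DRDF on $S(G,2)$, whose vertices I write as $ab$ ($a,b\in V$), with copies $V_a=\{aj:j\in V\}\cong G$ and connecting edges $ab\sim ba$ exactly when $a\sim b$.

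I would start from the baseline $g(ab)=f(b)$, which restricts to a $\gamma_{dR}$-function on each copy $V_a$ and hence weighs $n\gamma_{dR}(G)$, and then save weight on the diagonal vertices. The key observation is that for $x\in V_3$ and any neighbour $j$ of $x$, the vertex $xj\in V_x$ is joined to $jx\in V_j$ with $g(jx)=f(x)=3$; hence every $xj$ with $j\sim x$ is double Roman dominated from outside $V_x$. The diagonal vertex $xx$, whose neighbours are precisely these $xj$, is therefore free to lose weight: I would set $g(xx)=2$ when $x$ is isolated in $\langle V_3\rangle$ (a saving of $1$) and $g(xx)=1$ when $x\in D_3$ (a saving of $2$), the latter $xx$ being dominated by the internal neighbour $xy$ with $g(xy)=f(y)=3$ coming from a $V_3$-neighbour $y$ of $x$. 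Summed over $x\in V_3$ this saves $(|V_3|-|D_3|)+2|D_3|=|V_3|+|D_3|$, exactly as required. (Lowering $g(xx)$ all the way to $0$ when $x\in D_3$ is in fact still legal and would yield the sharper saving $|V_3|+2|D_3|$, but that is not needed here.)

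The hard part is verifying that the modified $g$ is still a DRDF of $S(G,2)$, i.e. that lowering the diagonals breaks nothing. I would check each copy $V_x$ separately: a vertex $xj$ with $j\sim x$ and $f(j)=0$ survives by the external domination above; a vertex $xk$ with $k\not\sim x$ and $f(k)=0$ retains its original internal dominator, which is an off-diagonal vertex $xk'$ (here $k'\sim k$ forces $k'\ne x$, so $xk'$ is never one of the lowered diagonals); positive-valued vertices need nothing; and the lowered diagonal $xx$ is handled as stated. One must also note that $xx$ has no connecting edge, since the loop $x\sim x$ is absent, so lowering it cannot harm any vertex of another copy. The only genuine care needed is the bookkeeping between the two regimes $x\in D_3$ versus $x$ isolated in $\langle V_3\rangle$, together with the observation that each external dominator $jx$ is off-diagonal, and hence untouched, because $j\sim x$ forces $j\ne x$.
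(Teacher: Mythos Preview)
Your argument is correct and is essentially the paper's proof: both derive the lower bound verbatim from Theorem~\ref{rdn}, and both build the upper-bound DRDF by assigning $f(b)$ to each vertex $wab$ and then lowering the diagonals $wxx$ with $x\in V_3$. The only cosmetic differences are that you first reduce explicitly to $S(G,2)$ and replicate over the $n^{t-2}$ blocks, and that you lower the $D_3$-diagonals to $1$ whereas the paper lowers them to $0$ (which, as you yourself observe, actually yields the sharper saving $|V_3|+2|D_3|$).
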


\begin{proof}
	For the left inequality, the proof is as same as that of the Theorem \ref{rdn}.	 To prove the right inequality, let $f=(V_{0}, V_{2}, V_{3})$ be a $\gamma_{dR}$-function on $G$.\\
	 
	\textbf{\emph{	Step 1:}} For a given integer $t\geqslant2$, let $S_{i}=\{wx: w\in V^{t-1}, x\in V_{i}\}, $ for $i\in \{0, 2, 3\}.$ Let $g: V^{t}\rightarrow \{0, 2, 3\}$ such that $g=(S_{0}, S_{2}, S_{3}).$ If $v\in V^{t}$ and $g(v)=0$, then $v=wy$ where $w$ is a word in $V^{t-1}$ and $y\in V_{0}$. Since $f$ is a $\gamma_{dR}$-function on $G$, there is either $z\in V_{3}\cap N_{G}(y)$ or $x_{1}, x_{2}\in V_{2}\cap N_{G}(y).$ Hence, there exists either $wz\in V_{3}\cap N_{S(G, t)}(wy)$ or $wx_{1}, wx_{2}\in V_{2}\cap N_{S(G,t)}(wy).$ Therefore, $g$ is a double Roman dominating function on $S(G,t)$ and $\gamma_{dR}(S(G, t))\leqslant g(V^{t})=n^{t-1}(2|V_{2}|+3|V_{3}|)=n^{t-1}\gamma_{dR}(G).$\\
	
	\textbf{	\emph{Step 2:}} Let $S_{3}'=\{wuu: w\in V^{t-2}, u\in V_{3}\}.$ We define $g_{1}: V^{t}\rightarrow \{0, 2, 3\}$ such that $g_{1}=(S_{0}, S_{2}\cup S_{3}', S_{3}-S_{3}').$ Let $y\in S_{0}.$ Then $y$ has the form $wuv_{0}$ where $w\in V^{t-2}, u\in V$ and $v_{0}\in V_{0}.$    Since $f$ is a $\gamma_{dR}$-function on $G,$ there is either $v_{3}\in V_{3}$ or $v_{2}, v_{2}'\in V_{2}$ in $N_{G}(v_{0}).$ Therefore, there exists either $wuv_{3}\in N_{S(G,t)}(wuv_{0})$ or $wuv_{2}, wuv_{2}'\in N_{S(G,t)}(wuv_{0}).$ If $wuv_{3}\in S_{3}-S_{3}',$ or $wuv_{2}, wuv_{2}'\in N_{S(G,t)}(wuv_{0}), $ then we are done. Now, if $wuv_{3}\in S_{3}', $ then $v_{3}=u$ and, since $v_{0}$ is adjacent to $v_{3},$ we can conclude that $y=wv_{3}v_{0}$ is adjacent to $wv_{0}v_{3}\in S_{3}-S_{3}'.$ Hence $g_{1}$ is a double Roman dominating function on $S(G, t)$. Therefore, $\gamma_{dR}(S(G, t))\leqslant g_{1}(V^{t})\leqslant n^{t-2}(n\gamma_{dR}(G)-|V_{3}|).$\\
	
	\textbf{	\emph{Step 3:}} Let $S_{3}''=\{wvv: w\in V^{t-2}, v\in D_{3}, where\  D_{3} \ is\  the\  set\  of\  non-isolated\  vertices\  in\  <V_{3}>\}.$  We define $g_{2}: V^{t}\rightarrow \{0, 2, 3\}$ such that $g_{2}=(S_{0}\cup S_{3}'', S_{2}\cup S_{3}', S_{3}-S_{3}').$
	Let $x\in V^{t}$ such that $g_{2}(x)=0.$ In this case, $g_{1}(x)=0$ or $x\in S_{3}''.$ \\
	
	\indent Suppose that $g_{1}(x)=0.$ Then $x\in S_{0}$ and hence is of the form $x=wuv_{0},$ where $w\in V^{t-2}, u\in V$ and $v_{0}\in V_{0}.$ If $N_{S(G, t)}(x)\cap S_{3}''=\emptyset,$ then there exists  $y\in N_{S(G, t)}(x)\cap (S_{3}-S_{3}')$ or $y_{1}, y_{2}\in N_{S(G, t)}(x)\cap (S_{2}\cup S_{3}').$ On the other side, if $z\in N_{S(G, t)}(x)\cap S_{3}'',$ then $z=wv_{3}v_{3},$ where $v_{3}\in D_{3},$  $u=v_{3}$ and $v_{3}$ is adjacent to $v_{0},$ which implies that $x=wv_{3}v_{0}$ is adjacent to $wv_{0}v_{3}$ and we know that $g_{2}(wv_{0}v_{3})=g_{1}(wv_{0}v_{3})=g(wv_{0}v_{3})=3.$\\
	
	\indent Now, if $x\in S_{3}'',$ then $x=wvv,$ where $w\in V^{t-2}$ and $v\in D_{3}.$ So, by definition of $D_{3},$ $x$ must be adjacent to $wvu$ for some $u\in D_{3}-\{v\}.$ Also, $g_{2}(wvu)=g_{1}(wvu)=g(wvu)=f(u)=3.$ Therefore, $g_{2}$ is a double Roman dominating function on $S(G, t),$ and so $\gamma_{dR}(S(G, t))\leqslant n^{t-2}(n\gamma_{dR}(G)-|V_{3}|-|D_{3}|).$
\end{proof}
\section{The Particular Case of Complete Graphs}
We begin this section by proving the Roman domination number of $S(K_{n}, 2)$.

\begin{thm}
	$\gamma_{R}(S(K_{n}, 2))=2n-1$.
\end{thm}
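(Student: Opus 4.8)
## Proof Proposal

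My plan is to establish the equality $\gamma_{R}(S(K_{n},2)) = 2n-1$ by proving matching upper and lower bounds, treating the construction of $S(K_n,2)$ explicitly. Recall that $S(K_n,2)$ consists of $n$ copies of $K_n$ (the ``clusters'' $V_{wi} = \{ij : j \in V\}$ for each $i \in V$, here $w$ is empty since $t=2$), where the copy indexed by $i$ is joined to the copy indexed by $j$ by the single ``bridge'' edge $\{ij, ji\}$ whenever $\{i,j\} \in E(K_n)$, i.e.\ for all $i \neq j$. I would first describe an explicit RDF of weight $2n-1$ to get the upper bound, then argue the lower bound via a counting argument over the clusters.

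For the \textbf{upper bound}, I would construct a specific Roman dominating function as follows. Single out one cluster, say the one indexed by vertex $1$. Within that cluster, I assign the value $2$ to one vertex (say $11$) and value $1$ to the remaining $n-1$ vertices of that cluster; this is forced because vertices $1j$ with $j \neq 1$ have no neighbor of value $2$ outside the cluster unless we pay for them, and a single vertex of value $2$ inside a $K_n$ dominates all other cluster vertices. A cleaner choice: in each of the $n$ clusters assign value $2$ to exactly one suitably chosen vertex and value $0$ to the rest, since a single $2$ inside a copy of $K_n$ dominates the whole copy; this gives weight $2n$. To shave off one unit, I would exploit the bridge edges: choose the vertex-of-value-$2$ in each cluster to be the endpoint of a bridge, so that the $2$ at vertex $ij$ can cover the neighbor $ji$ in another cluster, allowing one cluster to carry a $1$ instead of a $2$. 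The heart of the upper-bound argument is to verify that one can arrange the $n$ chosen $2$'s along bridge edges so that a single cluster is fully dominated ``for free'' by an incoming bridge, reducing the total from $2n$ to $2n-1$.

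For the \textbf{lower bound}, I would argue that each cluster forces a certain local contribution. Consider any RDF $f$ and restrict attention to a single cluster $V_{wi} \cong K_n$: if no vertex of the cluster receives value $2$, then every vertex with $f = 0$ needs an external neighbor of value $2$, but each cluster vertex has at most one external (bridge) neighbor, so at most one $0$-vertex can be dominated from outside, forcing the remaining $n-1$ vertices to carry weight at least $n-1$ internally; if some vertex carries value $2$, the cluster contributes at least $2$. A careful bookkeeping across all $n$ clusters, charging each bridge edge to at most one cluster, should show the total weight is at least $2n-1$, with the deficit of $1$ accounting for the single cluster that may be dominated entirely from outside.

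The \textbf{main obstacle} I anticipate is the lower bound: the subtlety is that bridge edges are shared between two clusters, so a naive ``each cluster costs at least $2$'' over-counts the savings. I expect the crux is a discharging or careful case analysis showing that at most \emph{one} cluster can be subsidized down to weight below $2$ by incoming bridge assignments, so the aggregate cannot dip below $n \cdot 2 - 1 = 2n-1$. Isolating exactly where the single unit of slack can occur, and ruling out two simultaneous subsidies, is the delicate point; the upper-bound construction should then be engineered precisely to realize that unique slack.
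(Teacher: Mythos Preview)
Your overall plan --- explicit RDF for the upper bound, cluster-by-cluster accounting for the lower bound --- is the same road the paper takes, but your lower-bound argument contains a concrete error. You assert that in a cluster with no value-$2$ vertex ``at most one $0$-vertex can be dominated from outside.'' This is false: in the cluster $G_i=\{u_iu_j:j\in V\}$ every non-extreme vertex $u_iu_j$ ($j\neq i$) has its own private external neighbour $u_ju_i$, and these $n-1$ external neighbours are pairwise distinct, so \emph{all} $n-1$ non-extreme vertices could simultaneously be $0$ and be dominated from outside. What you are missing is that the \emph{extreme} vertex $u_iu_i$ has \emph{no} external neighbour whatsoever (its degree in $S(K_n,2)$ equals $n-1$, entirely inside $G_i$); it is this vertex alone that forces internal weight. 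Consequently a cluster with no value-$2$ vertex is guaranteed weight $\geq 1$, not $\geq n-1$.

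The paper pivots on precisely this extreme-vertex observation. For the upper bound it simply invokes the cited inequality $\gamma_R(S(K_n,t))\leq(2n^t+n-1)/(n+1)$ at $t=2$, which equals $2n-1$; your hands-on construction --- value $2$ on $u_ju_{i_0}$ for every $j\neq i_0$, value $1$ on $u_{i_0}u_{i_0}$, and $0$ elsewhere --- achieves the same thing. For the lower bound the paper argues that if every $G_i$ contained a value-$2$ vertex the weight would already be $\geq 2n$, contradicting the upper bound; hence some $G_{i_0}$ has no value-$2$ vertex, its extreme vertex carries a $1$, and each remaining $u_{i_0}u_j$ then forces $u_ju_{i_0}$ to have value $2$, yielding $1+2(n-1)=2n-1$. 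If you prefer your direct route (not feeding the upper bound into the lower-bound proof), the correct ``at most one subsidy'' statement is: at most one cluster can have weight exactly $1$. Indeed, a weight-$1$ cluster must look like $f(u_iu_i)=1$ and $f(u_iu_j)=0$ for all $j\neq i$; if two clusters $G_{i_1},G_{i_2}$ were of this form, then $u_{i_1}u_{i_2}$ (value $0$) would need its sole external neighbour $u_{i_2}u_{i_1}$ to have value $2$, but that vertex is also $0$ --- contradiction. Hence at most one cluster has weight $1$ and the rest have weight $\geq 2$, giving total $\geq 2(n-1)+1=2n-1$.
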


\begin{proof}
	By Theorem \ref{Rdnsknt}, we can easily deduce that $\gamma_{R}(S(K_{n}, 2))\leqslant 2n-1$.	For the reverse inequality, let $V(K_{n})=\{u_{1}, u_{2},\ldots,u_{n}\}$. Then $S(K_{n}, 2)$ is a graph with vertex set $\{u_{i}u_{j}: i,j \in \{1, 2, \ldots, n\}\}$ and edge set $\{\{u_{i}u_{j}, u_{j}u_{i}\}: u_{i}, u_{j}\in V, i\neq j\} \cup \{\{u_{i}u_{j}, u_{i}u_{k}\}: u_{i}, u_{j}, u_{k} \in V, j\neq k\}$. Note that $G_{i}=\{u_{i}u_{j}: j=1,2,\ldots,n\}$ induces a copy of $K_{n}$ for each $i=1,2,\ldots,n$ and vertex $u_{i}u_{i}$ is an extreme vertex of $S(K_{n},2)$ for each $i.$ Let $f=(V_{0},V_{1}, V_{2})$ be any $\gamma_{R}$-function of $S(K_{n},2).$ Since $d_{S(K_{n},2)}(u_{i}u_{i})=d_{K_{n}}(u_{i})=n-1$, to  Roman dominate the extreme vertex $u_{i}u_{i}$,  $G_{i}$  must contain either a vertex of value $2$ or $u_{i}u_{i}$ is  of value $1$. If every $G_{i}$ contains a vertex of value $2$,  then $f(V(S(K_{n},2)))\geqslant 2n$, which is a contradiction. Therefore, there exists at least one $G_{i}, $ say $G_{i_{0}},$ which contains exactly one vertex in $V_{1}$ (and all other vertices are in $V_{0}$).  Then, by the property of extreme vertex, the vertex in $V_{1}$ is  $u_{i_{0}}u_{i_{0}}.$ So, to  Roman dominate $u_{i_{0}}u_{j},$ for each $j\neq i_{0},$ $u_{j}u_{i_{0}}\in V_{2}.$  Therefore, $\gamma_{R}(S(K_{n}, 2))\geqslant 2(n-1)+2=2n-1.$ 
	Hence, the result.
\end{proof}

\begin{thm}
	$\gamma_{dR}(S(K_{n}, 2))=3n-1$.
\end{thm}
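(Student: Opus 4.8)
The upper bound is immediate from Theorem \ref{sier}. For $G=K_{n}$ one has $\gamma_{dR}(K_{n})=3$, realised by the $\gamma_{dR}$-function assigning $3$ to a single vertex and $0$ to all others; thus $|V_{3}|=1$ and, since the unique vertex of $V_{3}$ is isolated in $\langle V_{3}\rangle$, we have $D_{3}=\emptyset$. Also $\alpha(K_{n})=1$. Substituting $t=2$ into the right inequality of Theorem \ref{sier} gives $\gamma_{dR}(S(K_{n},2))\leqslant n\cdot 3-1-0=3n-1$. Hence the whole content of the theorem reduces to the matching lower bound $\gamma_{dR}(S(K_{n},2))\geqslant 3n-1$.

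For the lower bound, by Proposition \ref{1} I may fix a $\gamma_{dR}$-function $f=(V_{0},V_{2},V_{3})$ with $V_{1}=\emptyset$. Using the partition of $V(S(K_{n},2))$ into the copies $G_{1},\ldots,G_{n}$, each inducing $K_{n}$ with extreme vertex $u_{i}u_{i}$, I set $w_{i}=\sum_{v\in G_{i}}f(v)$, so that $\gamma_{dR}(S(K_{n},2))=\sum_{i=1}^{n}w_{i}$. The plan is to prove that $w_{i}\geqslant 2$ for every $i$ and that at most one index $i$ satisfies $w_{i}=2$; together these give $\sum_{i}w_{i}\geqslant 3(n-1)+2=3n-1$.

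First I would observe that every neighbour of the extreme vertex $u_{i}u_{i}$ lies in $G_{i}$ (it has no bridging edge), so $u_{i}u_{i}$ can only be double Roman dominated from inside $G_{i}$, which already forces $w_{i}\geqslant 2$. A short case check then shows that $w_{i}=2$ can occur only when $f(u_{i}u_{i})=2$ and every other vertex of $G_{i}$ is assigned $0$: a value-$0$ extreme vertex would require either a $V_{3}$-neighbour, costing $3$, or two $V_{2}$-neighbours, costing $4$, inside $G_{i}$, both incompatible with $w_{i}=2$.

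The uniqueness step is the crux, and is where I expect the main difficulty. Suppose $w_{i}=2$. For each $j\neq i$ consider the vertex $u_{i}u_{j}\in V_{0}$: inside $G_{i}$ its only positively weighted neighbour is $u_{i}u_{i}\in V_{2}$, so it has exactly one $V_{2}$-neighbour there and no $V_{3}$-neighbour in $G_{i}$, while its only remaining neighbour is the bridging vertex $u_{j}u_{i}\in G_{j}$. Satisfying the double Roman condition at $u_{i}u_{j}$ therefore forces $f(u_{j}u_{i})\geqslant 2$. Now if two distinct indices $i,i'$ both had weight $2$, applying this with the pair $(i,i')$ would give $f(u_{i'}u_{i})\geqslant 2$; but $u_{i'}u_{i}$ is a non-extreme vertex of the weight-$2$ copy $G_{i'}$, which by the structure above is assigned $0$, a contradiction. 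Hence at most one copy has weight exactly $2$, which establishes the lower bound and, combined with the upper bound above, completes the proof.
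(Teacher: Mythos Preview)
Your proof is correct and follows essentially the same route as the paper: both use the partition into the $n$ copies $G_i$, note that the extreme vertex $u_iu_i$ forces $w_i\geqslant 2$, identify the structure of a weight-$2$ copy, and then use the bridging edge $u_iu_j\!-\!u_ju_i$ to show the remaining copies each contribute at least $3$. Your presentation is in fact a little tighter than the paper's: you invoke Proposition~\ref{1} to discard value~$1$ up front, and you replace the paper's slightly informal ``it is optimal to assign the value $3$'' step by the cleaner observation that two distinct weight-$2$ copies would force $f(u_{i'}u_i)\geqslant 2$ on a vertex that must carry $0$.
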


\begin{proof}
By Theorem \ref{sier}, we can easily deduce that $\gamma_{dR}(S(K_{n}, 2))\leqslant 3n-1$, since $|V_{3}|=1$ and $|D_{3}|=0$.	For the reverse inequality, let $V(K_{n})=\{u_{1}, u_{2},\ldots,u_{n}\}$. Then $S(K_{n}, 2)$ is a graph with vertex set $\{u_{i}u_{j}: i,j \in \{1, 2, \ldots, n\}\}$ and edge set $\{\{u_{i}u_{j}, u_{j}u_{i}\}: u_{i}, u_{j}\in V, i\neq j\} \cup \{\{u_{i}u_{j}, u_{i}u_{k}\}: u_{i}, u_{j}, u_{k} \in V, j\neq k\}$. Note that $G_{i}=\{u_{i}u_{j}: j=1,2,\ldots,n\}$ induces a copy of $K_{n}$ for each $i=1,2,\ldots,n$ and vertex $u_{i}u_{i}$ is an extreme vertex of $S(K_{n},2)$ for each $i.$ Let $f$ be any $\gamma_{dR}$-function of $S(K_{n},2).$ To double Roman dominate the extreme vertex  $u_{i}u_{i},$ $G_{i}$ must contain either one vertex having value $3,$ or two vertices having value $2$ or $u_{i}u_{i}$ is of value $2.$ If every $G_{i}$ contains a vertex of value $3$ or two vertices of value 2, then $f(V(S(K_{n},2))\geqslant 3n$ which is a contradiction to Theorem \ref{sier}. Therefore, there exists at least one $G_{i}, $ say $G_{i_{0}},$ which contains exactly one vertex having value $2.$ Then, by the property of extreme vertex the value $2$ is assigned to $u_{i_{0}}u_{i_{0}}.$ To double Roman dominate $u_{i_{0}}u_{j},$ for each $j\neq i_{0},$ $u_{j}u_{i_{0}}$ must be in $V_{2}\cup V_{3}.$ If $u_{j}u_{i_{0}}\in V_{2},$ then $G_{j}$ contains at least one more vertex of value at least $2.$ Therefore, it is optimal to assign the value $3$ to $u_{j}u_{i_{0}},$ for every $j\neq i_{0}.$ But then $f(V(S(K_{n},2))\geqslant 2+3(n-1).$
Hence, $\gamma_{dR}(S(K_{n}, 2))=3n-1.$ 
\end{proof}

\ni \textbf{Acknowledgements:} 
The first author
thanks University Grants Commission for granting fellowship under
Faculty Development Programme (F.No.FIP/$12^{th}$ plan/KLMG 045 TF
07 of UGC-SWRO). \\

\end{document}